\def\@author#1{\g@addto@macro\elsauthors{\normalsize%
		\def\baselinestretch{1}%
		\upshape\authorsep#1\unskip\textsuperscript{%
			\ifx\@fnmark\@empty\else\unskip\sep\@fnmark\let\sep=,\fi
			\ifx\@corref\@empty\else\unskip\sep\@corref\let\sep=,\fi}%
		\def\authorsep{\unskip,\space}%
		\global\let\@fnmark\@empty
		\global\let\@corref\@empty  %% Added
		\global\let\sep\@empty}%
	\@eadauthor={#1}
}
\newtheorem{theorem}{Theorem}[section]
\newtheorem{corollary}[theorem]{Corollary}
\newtheorem{lemma}[theorem]{Lemma}
\theoremstyle{definition}
\newtheorem{example}[theorem]{Example}
\def\simp{\overset{\scriptscriptstyle{+}}{\sim}}
\def\simm{\overset{\scriptscriptstyle{-}}{\sim}}
\journal{DM.}
\begin{document}

\begin{frontmatter}

\title{\bf On eigenvalue multiplicity in signed graphs}

\author{Farzaneh Ramezani}
\ead{f.ramezani@kntu.ac.ir}
\address{Department of Mathematics, K.N. Toosi University of Technology\\ P.O. Box 16315-1618, Teheran, Iran.}

\author{Peter Rowlinson}
\ead{p.rowlinson@stirling.ac.uk}
\address{Mathematics and Statistics Group,
	Division of Computing Science and Mathematics, 
	University of Stirling\\ Scotland FK9 4LA, United Kingdom.}

\author{Zoran Stani\' c}
\ead{zstanic@math.rs}
\address{Faculty of Mathematics, University of Belgrade \\
Studentski trg 16, 11 000 Belgrade, Serbia.}

\begin{abstract}Given a signed graph $\Sigma$ with $n$ vertices, let $\mu$ be an eigenvalue of $\Sigma$, and let~$t$ be the codimension of the corresponding eigenspace. We prove that
	$$n\leq{t+2\choose 3}$$
	whenever $\mu\notin\{0, 1, -1\}$. We show that this bound is sharp by providing examples of signed graphs in which it is attained.  We also discuss particular cases in which the bound can be decreased.
\end{abstract}

\begin{keyword} Signed graph\sep Eigenvalue multiplicity\sep Net-regular signed graph\sep Star complement.
	
\MSC 05C22\sep 05C50
\end{keyword}

\end{frontmatter}

\section{Introduction}

A \textit{signed graph} $\Sigma$ is a pair $(G, \sigma)$, where $G=(V, E)$ is an (unsigned) graph, called the \textit{underlying graph}, and $\sigma\colon E\longrightarrow\{1, -1\}$ is the \textit{sign function} or the \textit{signature}.  The edge set $E$ consists  of positive and negative edges. Throughout the paper we interpret a  graph  as a signed graph in which all edges are positive.

The \textit{degree} $d_i$ of a vertex $i$ coincides with its degree in the underlying graph. Its \textit{net-degree} $d_i^{\pm}$ is the difference between the numbers of positive and negative edges incident with it.
A signed graph is called   \textit{net-regular} if the net-degree is constant on the vertex set,  and it is called {\em regular} if the underlying graph is regular.

The \textit{adjacency matrix} $A_{\Sigma}$ of $\Sigma$ is obtained from the adjacency matrix of its underlying graph by reversing the sign of all 1s
which correspond to negative edges.
The \textit{eigenvalues} of $\Sigma$ are identified as the eigenvalues of $A_{\Sigma}$.

Let $\mu$ be an eigenvalue of $\Sigma$ with multiplicity $k$, and let $t=n-k$. Then~$k$ is the dimension of the eigenspace $\mathcal{E}(\mu)$ and $t$ is its codimension, i.e., the dimension of $\mathcal{E}(\mu)^\perp\subseteq \mathbb{R}^n$. It is not difficult to show that if $\mu\in\{0, 1, -1\}$, then $n$ is not bounded above by a function of $t$.

The spectral theory of signed graphs, which nicely encapsulates the spectral theory of graphs, has received a significant deal of attention in the recent past.  In this study we use the concept of star complements in signed graphs to extend some results of \cite{BeRo} concerning graphs to the class of signed graphs. In particular, we prove that, for 
$\mu\notin\{0, 1, -1\}$,
\begin{equation}\label{eq:nleq}
n\leq {t+2\choose 3}.\end{equation}
We provide examples of signed graphs attaining the bound \eqref{eq:nleq} and consider certain cases in which the bound can be decreased; for example, we prove that the bound is decreased by 1 for any signed graph switching equivalent to a net-regular signed graph.

In Section~\ref{sec:2} we fix some terminology and notation and list some preliminary results. This section also contains a brief introduction to the concept of star complements in signed graphs. Our contribution is reported in Sections~\ref{sec:main} and~\ref{sec:pc}.

\section{Preliminaries}\label{sec:2}

If the vertices $u$ and $v$ of a signed graph $\Sigma$ are adjacent, we write $u\sim v$; in particular, the existence of a positive (resp.~negative) edge between these vertices is designated by $u\simp v$ (resp.~$u\simm v$). The set of vertices adjacent to a vertex $u$ is denoted by $N(u)$. A signed graph is said to be \textit{homogeneous} if all its edges have the same
sign (in particular, if its edge set is empty). Otherwise, it is \textit{inhomogeneous}.

The \textit{negation} $-\Sigma$, of $\Sigma$, is obtained by reversing the sign of all edges of~$\Sigma$.  Thus it has  $-A_{\Sigma}$ 
as an adjacency matrix.

For $U$ a subset of the vertex set $V(\Sigma)$, let $\Sigma^U$ be the signed graph
obtained from $\Sigma$ by reversing the sign of each edge between
a vertex in $U$ and a vertex in $V(\Sigma) \setminus U$. The
signed graph $\Sigma^U$ is said to be {\it switching equivalent}
to $\Sigma$. Switching equivalent signed graphs share the same spectrum.

As  noted in \cite{Zas}, the all-1 vector $\mathbf{j}_n$ is an eigenvector of a signed graph $\Sigma$ if and only if $\Sigma$ is net-regular, and then $\mathbf{j}_n$ belongs to the eigenspace of its net-degree.

We turn to star complements. Let $\mu$ be an eigenvalue of $\Sigma$ with multiplicity $k$, and let $P$ be the matrix representing the orthogonal
projection of $\mathbb{R}^n$ onto the eigenspace
$\mathcal{E}(\mu)$ with respect to the canonical basis $\{\mathbf{e}_1, \mathbf{e}_2, \ldots, \mathbf{e}_n\}$ of $\mathbb{R}^n$. A set
$S\subseteq V(\Sigma)=\{1, 2, \ldots, n\}$, such that the vectors
$P\mathbf{e}_u~(u\in S)$ form a basis of $\mathcal{E}(\mu)$,
is called a {\em star set} for $\mu$, while the signed
graph induced by the set $\smash{\overline{S}=V(\Sigma)\setminus S}$ is called a {\em star complement} for $\mu$.  We note that $|S|=k$ and  write  $t=n-k$. 

Many results on star complements in graphs can be found in \cite{CvRoSi-1,CvRoSi-2}, and some of them are transferred to the class  of signed graphs in recent publications, for example \cite{StaF}. In particular,  $\mu$ is not an eigenvalue of any star complement for $\mu$. Moreover 
if $\mu\neq 0$, then every vertex of $S$ is adjacent to
at least one vertex of $\overline{S}$, and if $\mu\notin\{0, 1, -1\}$, then any  two vertices of $S$ have distinct neighbourhoods in~$\overline{S}$ \cite{StaF}.  Therefore, when $\mu\notin\{0, 1, -1\}$, we have $k\leq 3^t-1$, i.e., $n\leq 3^t+t-1$. This is an initial upper bound for $n$, which will be improved to the cubic function of $t$ given in~\eqref{eq:nleq}.

The following theorem,  called the Reconstruction Theorem, is a straightforward analogue of 
\cite[Theorem 5.1.7]{CvRoSi-2}.

\begin{theorem}\label{theo:rec}
	Let $\Sigma$ be a signed graph with adjacency matrix
	\[
	\begin{pmatrix}A_S&B^T\\B&C\end{pmatrix},
	\]
	where $A_S$ is the $k\times k$ adjacency matrix of the  subgraph
	induced by a vertex set $S$. Then $S$ is a star set for
	$\mu$ if and only if $\mu$ is not an eigenvalue of $C$ and
	\begin{equation*}\label{rct}\mu I-A_S = B^T(\mu
	I-C)^{-1}B.\end{equation*}
\end{theorem}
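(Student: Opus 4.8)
The plan is to pass to the block eigenvalue equations and translate the star-set condition into a statement about the null spaces of the blocks. Writing an eigenvector of $A_\Sigma$ for $\mu$ as $\binom{x}{y}$ with $x\in\mathbb{R}^{k}$ indexed by $S$ and $y\in\mathbb{R}^{t}$ indexed by $\overline{S}$, the equation $A_\Sigma\binom{x}{y}=\mu\binom{x}{y}$ splits into
\begin{equation*}
(\mu I-A_S)x=B^{T}y,\qquad Bx=(\mu I-C)y .
\end{equation*}
I would first record the standard reformulation of the defining property of a star set: since $\dim\mathcal{E}(\mu)=k=|S|$, the vectors $P\mathbf{e}_u\ (u\in S)$ form a basis of $\mathcal{E}(\mu)$ if and only if they are linearly independent, which (because $P\sum c_u\mathbf e_u=0$ means $\sum c_u\mathbf e_u\in\mathcal E(\mu)^{\perp}$) is equivalent to $\mathcal E(\mu)\cap\mathcal V_{\overline S}=\{0\}$, where $\mathcal V_{\overline S}=\operatorname{span}\{\mathbf e_v:v\in\overline S\}$. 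Reading this through the block equations, $S$ is a star set precisely when the only $y$ with $B^{T}y=0$ and $(\mu I-C)y=0$ is $y=0$; equivalently, the projection $\binom{x}{y}\mapsto x$ restricted to $\mathcal E(\mu)$ is injective, hence a bijection onto $\mathbb R^{k}$.

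For the forward implication, assume $S$ is a star set. Then the bijectivity just noted lets me write $y=Lx$ for a linear map $L\colon\mathbb R^{k}\to\mathbb R^{t}$, and the second block equation gives $B=(\mu I-C)L$. To see that $\mu$ is not an eigenvalue of $C$, take any $w$ with $(\mu I-C)w=0$; using symmetry of $\mu I-C$ we get $B^{T}w=L^{T}(\mu I-C)w=0$, so $w$ lies in the trivial intersection $\ker B^{T}\cap\ker(\mu I-C)$ and hence $w=0$. Thus $\mu I-C$ is invertible, $L=(\mu I-C)^{-1}B$, and substituting into the first block equation yields $\mu I-A_S=B^{T}(\mu I-C)^{-1}B$.

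For the converse, assume $\mu\notin\operatorname{spec}(C)$ and the displayed identity. Invertibility of $\mu I-C$ makes $\ker(\mu I-C)=\{0\}$, so the intersection criterion is satisfied and the $P\mathbf e_u$ are linearly independent; a Schur-complement computation, $\operatorname{rank}(\mu I-A_\Sigma)=\operatorname{rank}(\mu I-C)+\operatorname{rank}\!\big((\mu I-A_S)-B^{T}(\mu I-C)^{-1}B\big)=t+0$, shows $\dim\mathcal E(\mu)=n-t=k$, so these $k$ independent vectors form a basis and $S$ is a star set.

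The step I expect to be the main obstacle — really the only step with content — is proving in the forward direction that $\mu$ is not an eigenvalue of $C$: interlacing alone does not forbid a principal submatrix from inheriting $\mu$, so the argument must genuinely exploit the star-set hypothesis through the relation $B=(\mu I-C)L$. Everything else is the routine linear-algebra bookkeeping of the block decomposition, which transfers verbatim from the unsigned case since signs only affect the entries of $A_\Sigma$, not the structure of the argument.
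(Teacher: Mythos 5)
Your proof is correct. The paper itself gives no argument for this theorem --- it simply declares it a straightforward analogue of \cite[Theorem 5.1.7]{CvRoSi-2} --- and your block decomposition, the reformulation of the star-set condition as $\mathcal{E}(\mu)\cap\mathrm{span}\{\mathbf{e}_v : v\in\overline{S}\}=\{0\}$, the factorization $B=(\mu I-C)L$ to rule out $\mu$ as an eigenvalue of $C$, and the Schur-complement rank count for the converse are exactly the standard proof being invoked there, carried over unchanged to the signed setting.
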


If $S$ is a star set for $\mu$, then $C$ is the adjacency matrix of the corresponding star complement.  Now  for  $\mathbf{x}, \mathbf{y}\in \mathbb{R}^t$, we define the bilinear form
$$\langle \mathbf{x}, \mathbf{y}\rangle=\mathbf{x}^T(\mu I-C)^{-1}\mathbf{y}.$$
Here is a direct consequence.

\begin{corollary}\label{cor:sc}{\normalfont (cf.~\cite[Corollary 5.1.9]{CvRoSi-2})} If $\mu$ is not an eigenvalue of the signed graph $\Sigma'$
	 then there is a signed graph $\Sigma$ with $\Sigma'$ as a star
	complement for $\mu$ if and only if
	\begin{equation*}\langle{\bf b}_u,{\bf b}_u\rangle = \mu ~~~\text{and}~~~\langle{\bf b}_u,{\bf b}_v\rangle \in\{0, 1, -1\},\end{equation*}
	for all distinct $u,v\in S=V(\Sigma)\setminus V(\Sigma')$, where $\mathbf{b}_u$ and $\mathbf{b}_v$ determine the neighbourhoods of $u$ and $v$ in $\Sigma'$, respectively.
\end{corollary}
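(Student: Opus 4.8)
The plan is to read off the two required identities directly from the Reconstruction Theorem, applied with $C=A_{\Sigma'}$. The point to keep in mind is that the columns of the block $B$ in the adjacency matrix of $\Sigma$ are exactly the vectors $\mathbf{b}_u\in\{0,\pm1\}^t$ recording the signed neighbourhood of each star-set vertex $u\in S$ in $\Sigma'$ (entry $0$, $+1$ or $-1$ according to no edge, a positive edge, or a negative edge), so that $B$ is $t\times k$ with columns $\mathbf{b}_u$. Since $C$ is symmetric, so is $(\mu I-C)^{-1}$, and hence the $(u,v)$ entry of the matrix identity $\mu I-A_S=B^T(\mu I-C)^{-1}B$ of Theorem~\ref{theo:rec} is precisely $(\mu I-A_S)_{uv}=\mathbf{b}_u^T(\mu I-C)^{-1}\mathbf{b}_v=\langle\mathbf{b}_u,\mathbf{b}_v\rangle$. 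Everything then reduces to transcribing this identity entry by entry, the only adaptation from the graph case being that off-diagonal entries of a signed adjacency matrix now range over $\{0,\pm1\}$ rather than $\{0,1\}$.

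For the forward implication, suppose $\Sigma$ has $\Sigma'$ as a star complement for $\mu$, so that $S$ is a star set and Theorem~\ref{theo:rec} applies. Taking $u=v$ and using that $A_S$ has zero diagonal (there are no loops) gives $\langle\mathbf{b}_u,\mathbf{b}_u\rangle=\mu$; taking $u\neq v$ gives $\langle\mathbf{b}_u,\mathbf{b}_v\rangle=-(A_S)_{uv}$, which lies in $\{0,1,-1\}$ because the off-diagonal entries of a signed adjacency matrix do. This yields the stated conditions.

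For the converse I would run this argument backwards to build $\Sigma$ from the data $\{\mathbf{b}_u:u\in S\}$. Assemble the $\mathbf{b}_u$ as the columns of a $t\times k$ matrix $B$, and define a $k\times k$ matrix $A_S$ by $(A_S)_{uu}=0$ and $(A_S)_{uv}=-\langle\mathbf{b}_u,\mathbf{b}_v\rangle$ for $u\neq v$. The key verification is that $A_S$ is a legitimate signed adjacency matrix: it is symmetric because $(\mu I-C)^{-1}$ is symmetric, it has zero diagonal by construction, and its off-diagonal entries lie in $\{0,\pm1\}$ exactly by the hypothesis $\langle\mathbf{b}_u,\mathbf{b}_v\rangle\in\{0,1,-1\}$. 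The block matrix $\left(\begin{smallmatrix}A_S&B^T\\ B&C\end{smallmatrix}\right)$ is then the adjacency matrix of a signed graph $\Sigma$ containing $\Sigma'$, and the diagonal conditions $\langle\mathbf{b}_u,\mathbf{b}_u\rangle=\mu$ together with the off-diagonal definition of $A_S$ make the full identity $\mu I-A_S=B^T(\mu I-C)^{-1}B$ hold. Since $\mu$ is not an eigenvalue of $C$ by hypothesis, Theorem~\ref{theo:rec} gives that $S$ is a star set for $\mu$ in $\Sigma$, i.e.\ $\Sigma'$ is a star complement for $\mu$.

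The one genuinely substantive step — the one I would check most carefully — is confirming in the converse that the reconstructed $A_S$ really encodes a signed graph, that is, verifying symmetry and the $0/\pm1$ constraint simultaneously; the rest is a direct entrywise translation of Theorem~\ref{theo:rec}. It is worth noting, finally, that no distinctness of the $\mathbf{b}_u$ needs to be assumed here: distinctness of the neighbourhoods in $\overline{S}$ is a \emph{consequence} of $\mu\notin\{0,1,-1\}$, not an input to this construction.
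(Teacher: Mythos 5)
Your proposal is correct and follows exactly the route the paper intends: the paper offers no separate proof, presenting the corollary as a ``direct consequence'' of the Reconstruction Theorem, and your entrywise reading of $\mu I-A_S=B^T(\mu I-C)^{-1}B$ (including the sign convention $(A_S)_{uv}=-\langle\mathbf{b}_u,\mathbf{b}_v\rangle$, which matches the paper's remark that $\langle\mathbf{b}_u,\mathbf{b}_v\rangle=-1$ corresponds to a positive edge) is precisely that consequence spelled out. Nothing is missing.
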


Clearly, the vectors $\mathbf{b}_u~(1\leq u\leq k)$ form the $t \times k$ 
submatrix $B$ in the Reconstruction Theorem.  Also, if $\langle{\bf b}_u,{\bf b}_v\rangle=0$ (resp.~$\langle{\bf b}_u,{\bf b}_v\rangle=-1$, $\langle{\bf b}_u,{\bf b}_v\rangle=1$), then $u\nsim v$ (resp.~$u\simp v$, $u\simm v$).

\section{A sharp upper bound}\label{sec:main}

	We retain the notation from the previous section; in particular, we assume that $\mu\notin\{0, 1, -1\}$ is an eigenvalue of a signed graph $\Sigma$. The dimension and codimension of $\mathcal{E}(\mu)$ are denoted by $k$ and $t$, respectively.
	
	We set $S = (B\, |\,C -\mu I)$ and denote the columns of $S$ by
	$\mathbf{s}_u$ $(1\leq u\leq n)$. In particular, we have $\mathbf{s}_u = \mathbf{b}_u$, for $1\leq u\leq k$. Then,
	$$\mu I- A_{\Sigma} = S^T(\mu I - C)^{-1}S.$$
	It follows that, for every pair of vertices of $\Sigma$, we have
	\begin{equation}\label{dot}
	\langle \mathbf{s}_u,\mathbf{s}_v\rangle=\left\{
	\begin{array}{rl}
	\mu, &u=v, \\
	0, & u\nsim v, \\
	-1, & u\simp v, \\
	1, & u\simm v.
	\end{array}
	\right.
	\end{equation}

%	\begin{lemma} \label{dim} {\rm The dimension of the vector space spanned by $F_u$'s is at most ${t+2\choose 3}.$}
%	\end{lemma}
%	{\bf Proof.}
%	Note that the monomials of each $F_u(\mathbf{x})$ and the number of different monomials at each case can be of the following forms.
%	\begin{itemize}
%		\item $x_i^3$, there are $t$ of these monomials.
%		\item $x_i^2x_j$, for $i\neq j$, there are $t(t-1)$ of these monomials.
%		\item $x_ix_jx_k$, for $i\neq j\neq k$, there are ${t\choose 3}$ of these monomials.
%		%\item $x_i$, there are $t$ of these monomials.
%	\end{itemize}
%	Hence the dimension of the vector space spanned by $F_u$'s is at most $${t\choose 3}+t(t-1)+t={t+2\choose 3},$$ as desired. $\square$
	
	The following lemma is taken from \cite{BeRo}; the proof is basically unchanged.
	\begin{lemma} {\normalfont (cf.~\cite{BeRo})} \label{pq} {\rm Let
	${\bf w}=(w_1, w_2,\ldots, w_n)^T \in \mathcal{E}(\mu)^\perp$, and let 
	%$\mathbf{p}=(w_1, w_2, \ldots, w_k)^T$,% 
	$\mathbf{q}=(w_{k+1}, w_{k+2}, \ldots, w_n)^T$. Then $$\langle 
	{\bf s}_u,\mathbf{q}\rangle=-w_u,~~ \text{for}~~1\leq u\leq n.$$}
	\end{lemma}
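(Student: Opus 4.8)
The plan is to split the desired identity $\langle\mathbf{s}_u,\mathbf{q}\rangle=-w_u$ according to whether $u$ is a star-complement vertex ($k+1\le u\le n$) or a star-set vertex ($1\le u\le k$), and to note that only the latter range actually uses the hypothesis $\mathbf{w}\in\mathcal{E}(\mu)^\perp$. Throughout I write $\mathbf{w}=(\mathbf{p}^T\ \mathbf{q}^T)^T$ with $\mathbf{p}=(w_1,\ldots,w_k)^T\in\mathbb{R}^k$ and $\mathbf{q}=(w_{k+1},\ldots,w_n)^T\in\mathbb{R}^t$, and I use that $(\mu I-C)^{-1}$ exists and is symmetric, since $\mu$ is not an eigenvalue of the (symmetric) matrix $C$.

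For a star-complement vertex $u$, the column $\mathbf{s}_u$ is a column of $C-\mu I$, say $\mathbf{s}_u=(C-\mu I)\mathbf{e}_{u-k}$ for the appropriate standard basis vector $\mathbf{e}_{u-k}$ of $\mathbb{R}^t$. Then $\mathbf{s}_u^T(\mu I-C)^{-1}=\mathbf{e}_{u-k}^T(C-\mu I)(\mu I-C)^{-1}=-\mathbf{e}_{u-k}^T$, so that $\langle\mathbf{s}_u,\mathbf{q}\rangle=-\mathbf{e}_{u-k}^T\mathbf{q}=-w_u$. This part is purely formal; it uses the block structure $(B\,|\,C-\mu I)$ but neither the orthogonality of $\mathbf{w}$ nor the values in~\eqref{dot}.

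For a star-set vertex the key ingredient is an explicit description of $\mathcal{E}(\mu)$. Writing an eigenvector as $\mathbf{v}=(\mathbf{x}^T\ \mathbf{y}^T)^T$ and reading off the lower block of $A_{\Sigma}\mathbf{v}=\mu\mathbf{v}$ gives $B\mathbf{x}=(\mu I-C)\mathbf{y}$, hence $\mathbf{y}=(\mu I-C)^{-1}B\mathbf{x}$. Thus every vector of $\mathcal{E}(\mu)$ has the form $(\mathbf{x}^T\ (\,(\mu I-C)^{-1}B\mathbf{x})^T)^T$; since the assignment $\mathbf{x}\mapsto\mathbf{v}$ is injective and $\dim\mathcal{E}(\mu)=k$, these vectors fill out $\mathcal{E}(\mu)$ as $\mathbf{x}$ ranges over $\mathbb{R}^k$. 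Imposing $\mathbf{w}^T\mathbf{v}=0$ for all such $\mathbf{v}$ yields $\big(\mathbf{p}^T+\mathbf{q}^T(\mu I-C)^{-1}B\big)\mathbf{x}=0$ for every $\mathbf{x}\in\mathbb{R}^k$, forcing $\mathbf{p}^T=-\mathbf{q}^T(\mu I-C)^{-1}B$. Transposing gives $B^T(\mu I-C)^{-1}\mathbf{q}=-\mathbf{p}$, and the $u$-th entry of this vector is $\mathbf{b}_u^T(\mu I-C)^{-1}\mathbf{q}=\langle\mathbf{s}_u,\mathbf{q}\rangle=-w_u$ for $1\le u\le k$, as required.

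The computation is short, so there is no serious obstacle; the one step worth isolating is the passage from the abstract condition $\mathbf{w}\in\mathcal{E}(\mu)^\perp$ to the concrete linear relation $\mathbf{p}^T=-\mathbf{q}^T(\mu I-C)^{-1}B$, which relies entirely on the parametrization $\mathbf{y}=(\mu I-C)^{-1}B\mathbf{x}$ of the eigenvectors. It is notable that $\mu\notin\{0,1,-1\}$ and the specific scalar values in~\eqref{dot} are never used; the same argument establishes the claim for any $\mu$ that is not an eigenvalue of $C$.
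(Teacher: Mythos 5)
Your proof is correct, but it follows a genuinely different route from the one the paper relies on (the paper omits the proof entirely, deferring to \cite{BeRo}). The argument there is uniform in $u$: since $A_{\Sigma}$ is symmetric, $\mathcal{E}(\mu)^{\perp}$ is the column space of $\mu I-A_{\Sigma}=S^T(\mu I-C)^{-1}S$, so one writes $\mathbf{w}=S^T(\mu I-C)^{-1}S\mathbf{z}$ for some $\mathbf{z}\in\mathbb{R}^n$; the last $t$ rows of $S^T$ form $C-\mu I$, whence $\mathbf{q}=(C-\mu I)(\mu I-C)^{-1}S\mathbf{z}=-S\mathbf{z}$, and then $w_u=\mathbf{s}_u^T(\mu I-C)^{-1}S\mathbf{z}=-\langle\mathbf{s}_u,\mathbf{q}\rangle$ for all $u$ simultaneously. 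You instead split by vertex class: the star-complement case is the same formal computation that appears as the second half of the uniform argument, while for star-set vertices you re-derive the parametrization $\mathbf{y}=(\mu I-C)^{-1}B\mathbf{x}$ of $\mathcal{E}(\mu)$ (your injectivity-plus-dimension-count justification for why every $\mathbf{x}\in\mathbb{R}^k$ occurs is valid, resting only on $\mu$ not being an eigenvalue of $C$) and impose orthogonality directly. What the uniform proof buys is brevity and direct reuse of the identity $\mu I-A_{\Sigma}=S^T(\mu I-C)^{-1}S$ already displayed in the paper; what yours buys is transparency about which hypotheses feed which half of the conclusion. Your closing remark is also accurate: neither argument uses $\mu\notin\{0,1,-1\}$ or the specific values in \eqref{dot}; only the invertibility of $\mu I-C$ matters.
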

	
		To each vertex $u$ we associate a function $F_u\colon \mathbb{R}^t\longrightarrow \mathbb{R}$ defined by
		\begin{equation}\label{fun}
		F_u(\mathbf{x})=\langle \mathbf{s}_u,\mathbf{x}\rangle^3.
		\end{equation}
	
	%For the sake of completeness we give a brief proof of the following result.
	
		%\begin{lemma} \label{ind}If there exist vectors 
		%${\bf x}_1, {\bf x}_2,\ldots,{\bf x}_ n\in \mathbb{R}^t$ such that the matrix  $(F_u(\mathbf{x}_v))$ is invertible, then 
		%$F_1, F_2,\ldots,  F_n$ are linearly independent.
		%\end{lemma}
		%\begin{proof}
			%Assume the contrary and let $\mathbf{a}=(\alpha_1, \alpha_2,\ldots, \alpha_n)^T$ be a non-zero vector such that $\sum_{u=1}^n \alpha_uF_u=0$. In other words, for every $\mathbf{x}\in\mathbb{R}^t$, we have $$\sum_{u=1}^n \alpha_uF_u(\mathbf{x})=0.$$
			%In particular, we have $\sum_{u=1}^n \alpha_uF_u(\mathbf{x}_v)=0$, for $1\leq v\leq n$. But this means that the rows of $M$ are linearly dependent and  contradicts the hypothesis of the lemma. \end{proof}
	
%	In the following, we first prove that $F_u$'s are linearly independent, it turns out the inequality $$n\leq {t+2\choose 3}.$$

%We proceed with the following result.

	\begin{lemma} \label{ind}
		The functions $F_1, F_2, \ldots, F_n$ defined by~\eqref{fun} are linearly independent.
	\end{lemma}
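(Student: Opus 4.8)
The plan is to assume a linear dependence $\sum_{u=1}^{n} c_u F_u = 0$ and to deduce that $\mathbf{c}=(c_1,\dots,c_n)^T$ vanishes. Written out, the hypothesis says $\sum_u c_u\langle\mathbf{s}_u,\mathbf{x}\rangle^3=0$ for every $\mathbf{x}\in\mathbb{R}^t$. Since we work over $\mathbb{R}$, I would first polarise this cubic identity: the symmetric trilinear form $\Phi(\mathbf{x},\mathbf{y},\mathbf{z})=\sum_u c_u\langle\mathbf{s}_u,\mathbf{x}\rangle\langle\mathbf{s}_u,\mathbf{y}\rangle\langle\mathbf{s}_u,\mathbf{z}\rangle$ is recovered from $\Phi(\mathbf{x},\mathbf{x},\mathbf{x})$ by the usual polarisation formula, so it too vanishes identically. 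The whole argument then consists of feeding two carefully chosen triples of vectors into $\Phi\equiv 0$ and reading off the values \eqref{dot}.

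First I would set $\mathbf{x}=\mathbf{y}=\mathbf{z}=\mathbf{s}_v$. Because $\langle\mathbf{s}_u,\mathbf{s}_v\rangle^3$ equals $\mu^3$ when $u=v$ and equals $\langle\mathbf{s}_u,\mathbf{s}_v\rangle=-(A_\Sigma)_{uv}$ when $u\neq v$ (the cube of a number in $\{0,\pm1\}$ being itself), this yields $\mu^3 c_v=(A_\Sigma\mathbf{c})_v$ for every $v$, that is, $A_\Sigma\mathbf{c}=\mu^3\mathbf{c}$. Next I would set $\mathbf{x}=\mathbf{y}=\mathbf{s}_v$ and let $\mathbf{z}=\mathbf{s}_w$ range over $w=1,\dots,n$. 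This gives $\sum_u c_u\langle\mathbf{s}_u,\mathbf{s}_v\rangle^2\langle\mathbf{s}_u,\mathbf{s}_w\rangle=0$ for all $w$. If $\mathbf{m}^{(v)}$ denotes the vector with entries $c_u\langle\mathbf{s}_u,\mathbf{s}_v\rangle^2$, then by symmetry of $\mu I-A_\Sigma$ this sum is precisely the $w$-th entry of $(\mu I-A_\Sigma)\mathbf{m}^{(v)}$, so I conclude $A_\Sigma\mathbf{m}^{(v)}=\mu\mathbf{m}^{(v)}$, i.e. $\mathbf{m}^{(v)}\in\mathcal{E}(\mu)$.

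The decisive step is to combine these two facts. As $A_\Sigma$ is symmetric and $\mathbf{c},\mathbf{m}^{(v)}$ are eigenvectors for the eigenvalues $\mu^3$ and $\mu$, evaluating $\mathbf{c}^T A_\Sigma\mathbf{m}^{(v)}$ in two ways gives $(\mu^3-\mu)\,\mathbf{c}^T\mathbf{m}^{(v)}=0$. This is exactly where the hypothesis $\mu\notin\{0,1,-1\}$ is used in an essential way: it guarantees $\mu^3\neq\mu$, whence $\mathbf{c}^T\mathbf{m}^{(v)}=0$. Expanding this inner product through the values \eqref{dot} turns it into a sum of squares, $\mu^2 c_v^2+\sum_{u\sim v}c_u^2=0$, since $\langle\mathbf{s}_u,\mathbf{s}_v\rangle^2$ is $\mu^2$ for $u=v$, is $1$ for $u\sim v$, and is $0$ otherwise. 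As $\mu\neq 0$, the single term $\mu^2 c_v^2$ already forces $c_v=0$, and letting $v$ run over all vertices gives $\mathbf{c}=0$.

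I expect the main obstacle to be conceptual rather than computational: a lone cubic identity carries very little information until it is polarised, and the crux is to notice that exactly two substitutions manufacture two eigenvectors of $A_\Sigma$ attached to the \emph{distinct} eigenvalues $\mu$ and $\mu^3$. Once the orthogonality forced by this distinctness is invoked, positivity of the resulting sum of squares finishes the job; both the separation $\mu^3\neq\mu$ and the nonvanishing $\mu\neq0$ are precisely the content of $\mu\notin\{0,1,-1\}$. Linear independence of $F_1,\dots,F_n$ then places them inside the space of homogeneous cubic forms on $\mathbb{R}^t$, which has dimension $\binom{t+2}{3}$, and this is how the bound \eqref{eq:nleq} will be read off.
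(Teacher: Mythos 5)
Your proof is correct. Its first half coincides with the paper's: polarising the cubic identity yields $\sum_u c_u\langle\mathbf{s}_u,\mathbf{x}\rangle^2\langle\mathbf{s}_u,\mathbf{y}\rangle=0$ (the paper's equation \eqref{3}), and evaluating the original dependence at $\mathbf{x}=\mathbf{s}_v$ gives $(\mu^3I-A_{\Sigma})\mathbf{c}=\mathbf{0}$, exactly as in the paper. You diverge in the choice of test vectors fed into the polarised identity. The paper substitutes $\mathbf{x}=\mathbf{q}$, where $\mathbf{q}$ is extracted from an arbitrary $\mathbf{w}\in\mathcal{E}(\mu)^\perp$ via Lemma~\ref{pq}; this produces the vector $(\alpha_uw_u^2)_{u}\in\mathcal{E}(\mu)$, orthogonality against $\mathbf{c}$ gives only $\alpha_uw_u=0$, and a further step is needed (for each $u$ some $\mathbf{w}\in\mathcal{E}(\mu)^\perp$ has $w_u\neq 0$, else $\mathbf{e}_u\in\mathcal{E}(\mu)$ forces $\mu=0$). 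You instead substitute $\mathbf{x}=\mathbf{s}_v$, $\mathbf{y}=\mathbf{s}_w$, obtaining for each vertex $v$ the vector $\mathbf{m}^{(v)}=(c_u\langle\mathbf{s}_u,\mathbf{s}_v\rangle^2)_u\in\mathcal{E}(\mu)$; the orthogonality $\mathbf{c}^T\mathbf{m}^{(v)}=0$ then reads, by \eqref{dot}, as the sum of squares $\mu^2c_v^2+\sum_{u\sim v}c_u^2=0$, and the single term $\mu^2c_v^2$ kills $c_v$ outright since $\mu\neq 0$. Your route is shorter and self-contained: it dispenses with Lemma~\ref{pq} and with the final contradiction argument. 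What the paper's choice buys is that the $\mathcal{E}(\mu)^\perp$ machinery set up here is reused verbatim in the proof of Theorem~\ref{the:dec}, where $\mathbf{j}_n\in\mathcal{E}(\mu)^\perp$ is the relevant test vector.
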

	
	\begin{proof} Using (\ref{dot}) and (\ref{fun}), we obtain
		$$F_u(\mathbf{s}_v)=\left\{
	\begin{array}{rl}
	\mu^3, & u=v, \\
	0, & u\nsim v, \\
	-1, & u\simp v, \\
	1, & u\simm v.
	\end{array}
	\right.
	$$
	
	Assume that there is a vector $\mathbf{a}=(\alpha_1,\alpha_2,\ldots,\alpha_n)^T\in\mathbb{R}^n$, such that $\sum_{u=1}^n \alpha_uF_u(\mathbf{x})=0$ holds for every $\mathbf{x}\in \mathbb{R}^t$. Then, for all $\mathbf{x}, \mathbf{y}\in \mathbb{R}^t$, we have $\sum_{u=1}^n \alpha_uF_u(\mathbf{x}+\mathbf{y})=0$. Hence,
	%\begin{equation}\label{pow}
	%\sum_{u=1}^n \alpha_u\langle \mathbf{s}_u,\mathbf{x}+\mathbf{y}\rangle^3=\sum_{u=1}^n \alpha_u\langle \mathbf{s}_u,\mathbf{x}\rangle^3+3\sum_{u=1}^n \alpha_u\langle \mathbf{s}_u,\mathbf{x}\rangle^2\langle \mathbf{s}_u,\mathbf{y}\rangle+3\sum_{u=1}^n \alpha_u\langle \mathbf{s}_u,\mathbf{x}\rangle\langle \mathbf{s}_u,\mathbf{y}\rangle^2+\sum_{u=1}^n \alpha_u\langle \mathbf{s}_u,\mathbf{y}\rangle^3
	%\end{equation}	
	\begin{eqnarray*}
		% \nonumber to remove numbering (before each equation)
		\sum_{u=1}^n \alpha_u\langle \mathbf{s}_u,\mathbf{x}+\mathbf{y}\rangle^3 &=& \sum_{u=1}^n \alpha_u\langle \mathbf{s}_u,\mathbf{x}\rangle^3 +3\sum_{u=1}^n \alpha_u\langle \mathbf{s}_u,\mathbf{x}\rangle^2\langle \mathbf{s}_u,\mathbf{y}\rangle \\
		&&+  3\sum_{u=1}^n \alpha_u\langle \mathbf{s}_u,\mathbf{x}\rangle\langle \mathbf{s}_u,\mathbf{y}\rangle^2+\sum_{u=1}^n \alpha_u\langle \mathbf{s}_u,\mathbf{y}\rangle^3=0,
	\end{eqnarray*}
	i.e.,
	$$\sum_{u=1}^n \alpha_u\langle \mathbf{s}_u,\mathbf{x}\rangle^2\langle \mathbf{s}_u,\mathbf{y}\rangle + \sum_{u=1}^n \alpha_u\langle \mathbf{s}_u,\mathbf{x}\rangle\langle \mathbf{s}_u,\mathbf{y}\rangle^2=0.$$
	In the similar way, by computing $\sum_{u=1}^n \alpha_u\langle \mathbf{s}_u,\mathbf{x}-\mathbf{y}\rangle^3$, we obtain
	$$-\sum_{u=1}^n \alpha_u\langle \mathbf{s}_u,\mathbf{x}\rangle^2\langle \mathbf{s}_u,\mathbf{y}\rangle + \sum_{u=1}^n \alpha_u\langle \mathbf{s}_u,\mathbf{x}\rangle\langle \mathbf{s}_u,\mathbf{y}\rangle^2=0,$$
	which, together with the previous equality, gives
	\begin{equation}\label{3}
	\sum_{u=1}^n \alpha_u\langle \mathbf{s}_u,\mathbf{x}\rangle^2\langle \mathbf{s}_u,\mathbf{y}\rangle=0,~~\text{for all}~~ \mathbf{x}, \mathbf{y}\in \mathbb{R}^t.
	\end{equation}
	Now, let $\mathbf{w}=(w_1,w_2,\ldots,w_n)^T
	\in \mathcal{E}(\mu)^\perp$,  ${\bf q}=(w_{k+1},\ldots,w_n)^T$.  By Lemma~\ref{pq}, we have $\langle \mathbf{s}_u,\mathbf{q}\rangle=-w_u$, for  $1\leq u\leq n$. By setting $\mathbf{x}=\mathbf{q}$ and $\mathbf{y}=\mathbf{s}_i$ in (\ref{3}), we obtain \begin{equation*}\label{4}
	\sum_{u=1}^n \alpha_u{w_u}^2\langle \mathbf{s}_u,\mathbf{s}_i\rangle=0,
	\end{equation*} which implies that $$\mu \alpha_i{w_i}^2-\sum_{u\sim i}\alpha_u{w_u}^2\sigma(ui)=0.$$
	Then we have
	$$(\mu I-A_{\Sigma})\mathbf{a}^*=0,$$
	where $\mathbf{a}^*=(\alpha_1{w_1}^2, \alpha_2{w_2}^2, \ldots,\alpha_n{w_n}^2)^T$. On the other hand, it is also the case that $$(\mu^3 I-A_{\Sigma})\mathbf{a}=0.$$
	Since $\mu \neq \mu^3$, we have $\mathbf{a}^T\mathbf{a}^*=0$, whence $\sum_{u=1}^n {\alpha_u}^2{w_u}^2=0$. Therefore,  
$\alpha_u{w_u}=0$ holds for $1\leq u\leq n$. If $w_u=0$ for all $\mathbf{w}\in \mathcal{E}(\mu)^\perp$, then  
$\mathbf{e}_u$ is orthogonal to $\mathcal{E}(\mu)^{\perp}$, and so  
$A_{\Sigma}{\bf e}_u =\mu {\bf e}_u$, whence $\mu=0$, a contradiction. Thus, $\alpha_u=0$, for $1\leq u\leq n$, and so the functions $\langle \mathbf{s}_u,\mathbf{x}\rangle^3$ are linearly independent.  \end{proof}
	
	Using the foregoing lemma we arrive at the following result.
	
	\begin{theorem} \label{boun}
		Let $\Sigma$ be a signed graph with $n$ vertices, and let 
		$\mu$ be an eigenvalue of $\Sigma$ with multiplicity $k$. If $\mu\notin\{0, 1, -1\}$ then
		$$n\leq {t+2\choose 3},$$
		where $t=n-k$.
	\end{theorem}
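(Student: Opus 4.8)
The plan is to exhibit the $n$ functions $F_1,\dots,F_n$ as vectors in a single finite-dimensional real vector space whose dimension equals $\binom{t+2}{3}$, and then to read off the bound directly from their linear independence, which is already guaranteed by Lemma~\ref{ind}.

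First I would fix the bilinear form $\langle\cdot,\cdot\rangle$ and observe that, for each vertex $u$, the map $\mathbf{x}\mapsto\langle\mathbf{s}_u,\mathbf{x}\rangle=\mathbf{s}_u^T(\mu I-C)^{-1}\mathbf{x}$ is a \emph{linear} functional on $\mathbb{R}^t$; that is, writing $\mathbf{x}=(x_1,\dots,x_t)^T$, each $\langle\mathbf{s}_u,\mathbf{x}\rangle$ is a linear form in the coordinates $x_1,\dots,x_t$. Consequently $F_u(\mathbf{x})=\langle\mathbf{s}_u,\mathbf{x}\rangle^3$ is the cube of a linear form, hence a homogeneous polynomial of degree $3$ in $x_1,\dots,x_t$. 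Thus every $F_u$ lies in the real vector space $\mathcal{P}$ of homogeneous cubic polynomials in $t$ indeterminates.

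Next I would compute $\dim\mathcal{P}$. A basis of $\mathcal{P}$ is given by the monomials $x_1^{a_1}\cdots x_t^{a_t}$ with $a_1+\cdots+a_t=3$ and each $a_i\geq 0$; the number of such monomials is the number of ways to distribute $3$ units among $t$ variables, namely $\binom{t+3-1}{3}=\binom{t+2}{3}$. Since Lemma~\ref{ind} asserts that $F_1,\dots,F_n$ are linearly independent elements of $\mathcal{P}$, their number cannot exceed $\dim\mathcal{P}$, giving $n\leq\binom{t+2}{3}$, as required.

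The conceptual heart of the argument---establishing that the $n$ cubic functions are genuinely linearly independent---has already been carried out in Lemma~\ref{ind}, so no further obstacle of substance remains. The only point demanding care in this last step is the choice of ambient space: it is essential that $F_u$ be read as a cube of a \emph{linear} form, so that it is homogeneous of degree exactly $3$, rather than as an arbitrary function; it is precisely this homogeneity that confines all the $F_u$ to the $\binom{t+2}{3}$-dimensional space $\mathcal{P}$ and yields the cubic, rather than exponential, bound improving on the preliminary estimate $n\le 3^t+t-1$.
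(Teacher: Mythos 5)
Your proposal is correct and is essentially the paper's own proof: both arguments place the linearly independent functions $F_1,\dots,F_n$ (Lemma~\ref{ind}) in the space of homogeneous cubic forms on $\mathbb{R}^t$, whose dimension $\binom{t+2}{3}$ immediately bounds $n$. Your count $\binom{t+3-1}{3}=\binom{t+2}{3}$ of degree-three monomials is accurate, and the observation that each $F_u$ is the cube of a linear form is exactly the point the paper leaves implicit.
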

	
	\begin{proof}
		The functions $F_1, F_2, \ldots, F_n$ are linearly independent and belong \linebreak to the space ${\cal H}_3$ of homogeneous cubic functions on $\mathbb{R}^t$. Since this space has dimension 
		${t+2\choose 3}$,  the result follows.
	\end{proof}

	In the particular case of graphs, there is a similar result stating that                  $n\leq {t+1\choose 2}$ holds whenever $t>2$ and
	$\mu\notin\{0, -1\}$ \cite{BeRo}. The problem of determining all the graphs which attain this bound is open; one example can be found in~\cite{CvRoSi-2}.  We provide some examples of signed graphs.
	
	\begin{example}The quadrangle with an odd number of negative edges has the eigenvalues $\sqrt{2}$ and $-\sqrt{2}$, both with multiplicity 2. Thus, the upper bound of Theorem~\ref{boun} is attained (for both eigenvalues and $(n, t)=(4, 2)$).
	\end{example}
		
		\begin{example}
			We recall that if $\mathbf{e}_1, \mathbf{e}_2, \ldots, \mathbf{e}_8$ are the vectors of the canonical basis of $\mathbb{R}^8$, then the root system $E_8$  consists of the 112 vectors (also known as the roots) of the form $\pm\mathbf{e}_i\pm\mathbf{e}_j$, for $1\leq i<j\leq 8$, and the additional 128 roots of the form $\frac{1}{2}\sum_{i=1}^{8}\pm \mathbf{e}_i$, where the number of positive summands is even. A system of positive roots is a subset $E_8^+$ of $E_8$, such that (1) for 
			$\pm\mathbf{x}\in E_8$, exactly one of 
			$\mathbf{x}, -\mathbf{x}$ belongs to $E_8^+$ and (2) for $\mathbf{x},\mathbf{y}\in E_8^+$, if $\mathbf{x}+\mathbf{y}\in E_8$ then  $\mathbf{x}+\mathbf{y}\in E_8^+$. (A set of positive roots of $E_8$ is given in~\cite{Gle}.) Now, by \cite{StaF}, if the columns of  $N$ are the 120 roots of $E_8^+$, then $N^TN-2I$ is the adjacency matrix of a signed graph with eigenvalues 28 (with multiplicity 8) and $-2$ (with multiplicity 112). Therefore, the upper bound of Theorem~\ref{boun} is attained (for $(\mu, n, t)=(-2, 120, 8)$).
			
			We remark that different sets of positive roots generate switching equivalent signed graphs and that the negation of any of them also attains our upper bound (for $\mu=2$).
			
			\end{example}
		
		\section{Two particular cases}\label{sec:pc}
	
Here we consider the particular cases in which the bound obtained in the previous section can be decreased.

First, if $\mu$ is a non-main eigenvalue (that is, if $\mathcal{E}(\mu)$ is orthogonal to the all-1 vector $\mathbf{j}_n\in\mathbb{R}^n$), then the bound of Theorem~\ref{boun} can be decreased by~1.
	
	\begin{theorem}\label{the:dec}
			Let $\Sigma$ be a signed graph with $n$ vertices, and let 
			$\mu$ be a non-main eigenvalue of $\Sigma$ with multiplicity $k<n-1$. If $\mu\notin\{0, 1, -1\}$ then
		 \begin{equation}\label{eq:ubou}
			n\leq {t+2\choose 3}-1,\end{equation}
			where $t=n-k$.
	\end{theorem}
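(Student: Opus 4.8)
The plan is to sharpen the argument behind Theorem~\ref{boun} by producing one \emph{additional} cubic form that is linearly independent from $F_1,\dots,F_n$: since all $n+1$ forms then lie in $\mathcal{H}_3$, we would get $n+1\le\binom{t+2}{3}$. The non-main hypothesis enters through a single distinguished vector. Because $\mu$ is non-main, $\mathbf{j}_n\in\mathcal{E}(\mu)^\perp$, so Lemma~\ref{pq} applied with $\mathbf{w}=\mathbf{j}_n$ (whence $\mathbf{q}=\mathbf{j}_t$) gives $\langle\mathbf{s}_u,\mathbf{q}\rangle=-1$ for every $u$ with $1\le u\le n$. Writing $\rho=\langle\mathbf{q},\mathbf{q}\rangle$, I would adjoin to the family \eqref{fun} the cubic $F_0(\mathbf{x})=\langle\mathbf{q},\mathbf{x}\rangle^3$ and prove that $F_0,F_1,\dots,F_n$ are linearly independent.

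Suppose then that $\gamma_0F_0+\sum_{u=1}^n\alpha_uF_u=0$ on $\mathbb{R}^t$, and set $\mathbf{a}=(\alpha_1,\dots,\alpha_n)^T$. Polarising exactly as in the proof of Lemma~\ref{ind} yields $\gamma_0\langle\mathbf{q},\mathbf{x}\rangle\langle\mathbf{q},\mathbf{y}\rangle\langle\mathbf{q},\mathbf{z}\rangle+\sum_u\alpha_u\langle\mathbf{s}_u,\mathbf{x}\rangle\langle\mathbf{s}_u,\mathbf{y}\rangle\langle\mathbf{s}_u,\mathbf{z}\rangle=0$ for all $\mathbf{x},\mathbf{y},\mathbf{z}$. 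Evaluating the original cubic identity at $\mathbf{x}=\mathbf{s}_i$, using \eqref{dot} together with $\langle\mathbf{q},\mathbf{s}_i\rangle=-1$, gives $(\mu^3I-A_{\Sigma})\mathbf{a}=\gamma_0\mathbf{j}_n$; evaluating the bilinear polarisation at $(\mathbf{x},\mathbf{y})=(\mathbf{q},\mathbf{s}_i)$ gives $(\mu I-A_{\Sigma})\mathbf{a}=\gamma_0\rho^2\mathbf{j}_n$. Subtracting these two and recalling $\mu\notin\{0,1,-1\}$ (so $\mu^3\neq\mu$) forces $\mathbf{a}=c\,\mathbf{j}_n$ for some scalar $c$.

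The decisive step is to evaluate the full symmetric trilinear polarisation at $(\mathbf{s}_i,\mathbf{s}_j,\mathbf{q})$. Since $\langle\mathbf{q},\mathbf{s}_i\rangle=\langle\mathbf{q},\mathbf{s}_j\rangle=-1$ and, by \eqref{dot}, $\langle\mathbf{s}_u,\mathbf{s}_i\rangle=(\mu I-A_{\Sigma})_{ui}$, this collapses to $\sum_u\alpha_u(\mu I-A_{\Sigma})_{ui}(\mu I-A_{\Sigma})_{uj}=\gamma_0\rho$ for all $i,j$; with $\mathbf{a}=c\,\mathbf{j}_n$ it becomes the matrix identity $c\,(\mu I-A_{\Sigma})^2=\gamma_0\rho\,J_n$, where $J_n$ is the all-ones matrix. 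Here the hypothesis $k<n-1$ is used: as $\mu$ is an eigenvalue of multiplicity $k$, the symmetric matrix $(\mu I-A_{\Sigma})^2$ has rank $t=n-k\ge 2$, whereas $\mathrm{rank}(J_n)\le 1$. Hence $c=0$, so $\mathbf{a}=\mathbf{0}$, and then $(\mu^3I-A_{\Sigma})\mathbf{a}=\gamma_0\mathbf{j}_n$ forces $\gamma_0=0$. Thus $F_0,F_1,\dots,F_n$ are independent and $n+1\le\binom{t+2}{3}$.

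The part I expect to be delicate is precisely the elimination of the scalar $c$: the lower-order relations pin $\mathbf{a}$ down only to a multiple of $\mathbf{j}_n$, and $c\neq0$ cannot be excluded from them alone — indeed it corresponds exactly to the net-regular situation, in which $A_{\Sigma}\mathbf{j}_n$ is itself a multiple of $\mathbf{j}_n$. The rank identity $c\,(\mu I-A_{\Sigma})^2=\gamma_0\rho\,J_n$ is what resolves this, and it is the single place where $t\ge 2$ (equivalently $k<n-1$) is indispensable; note that the value of $\rho$ never needs to be computed.
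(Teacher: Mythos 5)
Your proof is correct, and its first half coincides with the paper's: you adjoin the extra cubic $\langle\mathbf{q},\mathbf{x}\rangle^3$ (the paper's $F(\mathbf{x})=\langle\mathbf{j},\mathbf{x}\rangle^3$), polarise, evaluate at $\mathbf{x}=\mathbf{s}_i$ and at $(\mathbf{q},\mathbf{s}_i)$ to obtain the two linear systems $(\mu^3I-A_\Sigma)\mathbf{a}=\gamma_0\mathbf{j}_n$ and $(\mu I-A_\Sigma)\mathbf{a}=\gamma_0\rho^2\mathbf{j}_n$, and subtract to force $\mathbf{a}=c\,\mathbf{j}_n$ --- exactly the paper's equations \eqref{11} and \eqref{12}. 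Where you genuinely diverge is in the endgame, the elimination of the residual scalar. The paper sets $\mathbf{y}=\mathbf{j}$ in the polarised identity to get $s\langle\mathbf{j},\mathbf{x}\rangle^2=-\beta\sum_u\langle\mathbf{s}_u,\mathbf{x}\rangle^2$, squares and compares with the fully polarised version to land on equality in the Cauchy--Schwarz inequality, $\sum_u\langle\mathbf{s}_u,\mathbf{x}\rangle^2\sum_u\langle\mathbf{s}_u,\mathbf{y}\rangle^2=\bigl(\sum_u\langle\mathbf{s}_u,\mathbf{x}\rangle\langle\mathbf{s}_u,\mathbf{y}\rangle\bigr)^2$ for all $\mathbf{x},\mathbf{y}$, which forces all of $\mathbb{R}^t$ to be one-dimensional, contradicting $t\geq2$. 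You instead evaluate the full trilinear polarisation at $(\mathbf{s}_i,\mathbf{s}_j,\mathbf{q})$ and read off the matrix identity $c\,(\mu I-A_\Sigma)^2=\gamma_0\rho\,J_n$, then compare ranks: $\mathrm{rank}\,(\mu I-A_\Sigma)^2=t\geq2$ while $\mathrm{rank}\,J_n\leq1$, so $c=0$ and hence $\gamma_0=0$. Both arguments are valid and both use $k<n-1$ only at this final step; your rank comparison is arguably shorter and makes the role of $t\geq2$ more transparent, while the paper's Cauchy--Schwarz route avoids forming the trilinear identity at pairs $(\mathbf{s}_i,\mathbf{s}_j)$ and is the device it reuses later in Section~\ref{sec:pc}. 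One small point of care in your write-up: the identity $\sum_u\alpha_u(\mu I-A_\Sigma)_{ui}(\mu I-A_\Sigma)_{uj}=\bigl((\mu I-A_\Sigma)^2\,\mathrm{diag}(\boldsymbol{\alpha})\bigr)$-type bookkeeping only collapses to $c\,(\mu I-A_\Sigma)^2$ after you substitute $\mathbf{a}=c\,\mathbf{j}_n$ and invoke the symmetry of $\mu I-A_\Sigma$; you do state this in the right order, so no gap results.
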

	\begin{proof}
		Since $\mathbf{j}_n\in\mathcal{E}(\mu)^\perp$, Lemma \ref{pq} shows that 
		$\langle \mathbf{s}_u,\mathbf{j}\rangle=-1~(1\leq u\leq n)$, where $\mathbf{j}$ is the all-$1$ vector in $\mathbb{R}^t$. Set $F(\mathbf{x})=\langle \mathbf{j},\mathbf{x}\rangle^3$ and suppose by way of contradiction that $F,F_1,\ldots, F_n$ are linearly dependent.  By Lemma~\ref{ind}, $F_1, F_2,\ldots, F_n$ are linearly independent,   and so
		$F$  belongs to the span of these functions,  
		say  $F=\sum_{u=1}^{n} \beta_u F_u$, that is, 
		$$\langle \mathbf{j},\mathbf{x}\rangle^3=\sum_{u=1}^{n}\beta_u \langle \mathbf{s}_u, \mathbf{x}\rangle^3,~~\text{for all}~~ \mathbf{x}\in  \mathbb{R}^t.$$
	By the argument exploited in the proof of Lemma \ref{ind}, we obtain
	\begin{equation}\label{eq:bas}
	\langle \mathbf{j},\mathbf{x}\rangle^2\langle \mathbf{j},\mathbf{y}\rangle=\sum_{u=1}^{n}\beta_u\langle \mathbf{s}_u,\mathbf{x}\rangle^2\langle \mathbf{s}_u,\mathbf{y}\rangle,~~ \text{for all}~~ \mathbf{x}, \mathbf{y}\in  \mathbb{R}^t.\end{equation}
	
	By setting $\mathbf{x}\!=\!\mathbf{y}\!=\!\mathbf{s}_i,$ we obtain $-1=\mu^3 \beta_i-\sum_{u\sim i}\beta_u\sigma(ui)$, for $1\leq i\leq n$, that is
	\begin{equation}\label{11}
	\mathbf{j}_n=(-\mu^3 I+A_{\Sigma})\mathbf{b},
	\end{equation}
	where $\mathbf{b}=(\beta_1, \beta_2, \ldots,\beta_n)^T$. Similarly, by setting $\mathbf{x}=\mathbf{j}$, $\mathbf{y}=\mathbf{s}_i$, we obtain $-\langle \mathbf{j},\mathbf{j} \rangle^2=\sum_u \beta_u\langle \mathbf{s}_u,\mathbf{s}_i\rangle=\mu\beta_i-\sum_{u\sim i}\beta_u\sigma(ui)$, which implies
	\begin{equation}\label{12}
	-\langle \mathbf{j},\mathbf{j} \rangle^2\mathbf{j}_n=(\mu I-A_{\Sigma})\mathbf{b}.
	\end{equation}
	From (\ref{11}) and (\ref{12}), we obtain $$(1-\langle \mathbf{j},\mathbf{j} \rangle^2)\mathbf{j}_n=(\mu-\mu^3)\mathbf{b}.$$ Observe that $\mu-\mu^3\neq 0$, and so $\mathbf{b}=\beta \mathbf{j}_n$ with  
	$\beta=\frac{1-s^2}{\mu(1-\mu^2)}$, where $s=\langle\mathbf{j}, \mathbf{j}\rangle$. In other words, $\mathbf{b}$ is a multiple of $\mathbf{j_n}$, which, together with \eqref{eq:bas}, gives
	\begin{equation}\label{eq:bas1}
	\langle \mathbf{j},\mathbf{x}\rangle^2\langle \mathbf{j},\mathbf{y}\rangle=\beta\sum_{u=1}^{n}\langle \mathbf{s}_u,\mathbf{x}\rangle^2\langle \mathbf{s}_u,\mathbf{y}\rangle,~~ \text{for all}~~ \mathbf{x}, \mathbf{y}\in  \mathbb{R}^t.\end{equation}

	%
	% implies that
	% \begin{equation}\label{na}
	%  \langle \mathbf{j},\mathbf{x}\rangle^4\langle \mathbf{j},\mathbf{y}\rangle^2=\beta^2\Big(\sum_{u=1}^n \langle \mathbf{s}_u, \mathbf{x}\rangle^2\langle \mathbf{s}_u,\mathbf{y}\rangle\Big)^2.
	% \end{equation}
	
By setting $\mathbf{y}=\mathbf{j}$ in \eqref{eq:bas1}, we get
	\begin{equation}
	\label{eq:j}
	s\langle \mathbf{j},\mathbf{x}\rangle^2=-\beta \sum_{u=1}^n \langle \mathbf{s}_u, \mathbf{x}\rangle^2,~~\text{for all}~~ \mathbf{x}\in\mathbb{R}^t,\end{equation}
	\noindent which gives
		\begin{equation}
		\label{eq:jj}
		s^2\langle \mathbf{j},\mathbf{x}\rangle^2\langle \mathbf{j},\mathbf{y}\rangle^2=\beta^2 \sum_{u=1}^n \langle \mathbf{s}_u, \mathbf{x}\rangle^2\sum_{u=1}^n \langle \mathbf{s}_u, \mathbf{y}\rangle^2,~~\text{for all}~~ \mathbf{x}, \mathbf{y}\in\mathbb{R}^t.\end{equation}
		
		By \eqref{eq:j}, we also have
		$$
		s\langle \mathbf{j},\mathbf{x}+\mathbf{y}\rangle^2=-\beta \sum_{u=1}^n \langle \mathbf{s}_u, \mathbf{x}+\mathbf{y}\rangle^2,~~\text{for all}~~ \mathbf{x}, \mathbf{y}\in\mathbb{R}^t,$$
		i.e.,
		$$s(\langle \mathbf{j},\mathbf{x}\rangle^2+2\langle\mathbf{j},\mathbf{x}\rangle\langle\mathbf{j},\mathbf{y}\rangle+\langle\mathbf{j},\mathbf{y}\rangle^2)=-\beta \sum_{u=1}^{n}(\langle \mathbf{s_u},\mathbf{x}\rangle^2+2\langle\mathbf{s_u},\mathbf{x}\rangle\langle\mathbf{s_u},\mathbf{y}\rangle+\langle\mathbf{s_u},\mathbf{y}\rangle^2),$$
		giving
		$$s\langle\mathbf{j},\mathbf{x}\rangle\langle\mathbf{j},\mathbf{y}\rangle=-\beta \sum_{u=1}^{n}\langle\mathbf{s_u},\mathbf{x}\rangle\langle\mathbf{s_u},\mathbf{y}\rangle,$$
		and so
		\begin{equation}
		\label{eq:jjj}
		s^2\langle \mathbf{j},\mathbf{x}\rangle^2\langle \mathbf{j},\mathbf{y}\rangle^2=\beta^2 \Big(\sum_{u=1}^n \langle \mathbf{s}_u, \mathbf{x}\rangle \langle \mathbf{s}_u, \mathbf{y}\rangle\Big)^2.\end{equation}
		Now, $\beta\neq 0$ in view of \eqref{eq:bas1}, and so by \eqref{eq:jj} and \eqref{eq:jjj}, we see that
		$$\sum_{u=1}^n \langle \mathbf{s}_u, \mathbf{x}\rangle^2\sum_{u=1}^n \langle \mathbf{s}_u, \mathbf{y}\rangle^2=\Big(\sum_{u=1}^n \langle \mathbf{s}_u, \mathbf{x}\rangle \langle \mathbf{s}_u, \mathbf{y}\rangle\Big)^2$$
		holds for all $\mathbf{x}, \mathbf{y}\in \mathbb{R}^t$. In other words, a Cauchy-Schwarz bound is attained and we may use the following argument from \cite{BeRo}.  We have $\langle \mathbf{s}_u, \mathbf{x}\rangle=\gamma \langle \mathbf{s}_u, \mathbf{y}\rangle$, for some $\gamma=\gamma(\mathbf{x}, \mathbf{y})\in \mathbb{R}$. Then $\langle \mathbf{s}_u,\mathbf{x}-\gamma \mathbf{y}\rangle=0$, which gives $s_u^T(\mu I-C)^{-1}(\mathbf{x}-\gamma \mathbf{y})=0$, for $1\leq u\leq n$, and so $$(C-\mu I)(\mu I-C)^{-1}(\mathbf{x}-\gamma \mathbf{y})=\mathbf{0}.$$
		The last identity implies that $\mathbf{x}=\gamma  \mathbf{y}$ holds for all $\mathbf{x},\mathbf{y}\in \mathbb{R}^t$, which is possible only for $t=1$, contrary to the assumption. Consequently, $F$ does not belong to the span of $F_1, F_2, \ldots, F_n$, also contrary to assumption.  Hence, ${\cal H}_3$ contains $n+1$ linearly independent 
		functions $F,F_1, \ldots, F_n$,  and we have 
		$n+1\le {\rm dim}({\cal H}_3)={t+2\choose 3}.$
\end{proof}

We arrive immediately at the following consequence.

\begin{corollary}\label{cor:attains}Let a signed graph $\Sigma$ with $n$ vertices be switching equivalent to a net-regular signed graph $\Sigma'$ with net-degree $\varrho$. Also, let $\mu\notin\{0, 1, -1, \varrho\}$ be an eigenvalue of $\Sigma$ with multiplicity $k$, and set $t=n-k$. If $t\geq 2$, then
 \begin{equation*}
 n\leq {t+2\choose 3}-1.\end{equation*}
\end{corollary}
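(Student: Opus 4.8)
The plan is to reduce the statement entirely to Theorem~\ref{the:dec}, applied not to $\Sigma$ itself but to the net-regular representative $\Sigma'$. The key observation is that everything appearing in the conclusion is a switching invariant, so it suffices to verify the hypotheses of Theorem~\ref{the:dec} for $\Sigma'$.

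First I would record that switching equivalence preserves the spectrum, as noted in Section~\ref{sec:2}. Hence $\mu$ is an eigenvalue of $\Sigma'$ with the same multiplicity $k$, the same $n$, and therefore the same codimension $t=n-k$. Since the inequality to be proved is a statement purely about the integers $n$ and $t$, it is legitimate to establish it by working with $\Sigma'$ in place of $\Sigma$.

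Next I would check that $\mu$ is a non-main eigenvalue of $\Sigma'$. As $\Sigma'$ is net-regular with net-degree $\varrho$, the remark following \cite{Zas} gives that $\mathbf{j}_n$ is an eigenvector of $\Sigma'$ lying in the eigenspace $\mathcal{E}_{\Sigma'}(\varrho)$. Because $\mu\neq\varrho$ and eigenspaces belonging to distinct eigenvalues of a symmetric matrix are mutually orthogonal, we obtain $\mathbf{j}_n\in\mathcal{E}_{\Sigma'}(\varrho)\subseteq\mathcal{E}_{\Sigma'}(\mu)^\perp$, i.e.\ $\mu$ is non-main for $\Sigma'$. Finally I would verify the remaining hypotheses of Theorem~\ref{the:dec}: we have $\mu\notin\{0,1,-1\}$ by assumption, while $t\geq 2$ forces $k=n-t\leq n-2<n-1$. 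Applying Theorem~\ref{the:dec} to $\Sigma'$ then yields $n\leq{t+2\choose 3}-1$, which is the asserted bound for $\Sigma$.

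The one point requiring care, and the reason the argument cannot simply invoke Theorem~\ref{the:dec} for $\Sigma$ directly, is that being a non-main eigenvalue is in general \emph{not} preserved under switching: the eigenvectors are transformed by a diagonal $\pm1$ matrix $D$, and orthogonality to $\mathbf{j}_n$ is respected only when $D\mathbf{j}_n=\pm\mathbf{j}_n$. Routing the proof through the net-regular $\Sigma'$, for which the invariant eigenvector $\mathbf{j}_n$ makes non-mainness transparent, is precisely what sidesteps this obstacle, and it is the only nontrivial idea in an otherwise immediate deduction.
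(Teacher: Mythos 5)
Your proposal is correct and follows essentially the same route as the paper: pass to the net-regular representative $\Sigma'$ (which shares the spectrum and hence the values of $n$, $k$, $t$), observe that $\mathbf{j}_n\in\mathcal{E}(\varrho)$ makes $\mu\neq\varrho$ non-main for $\Sigma'$, and apply Theorem~\ref{the:dec}. Your explicit verification of $k<n-1$ from $t\geq 2$ and your remark on why non-mainness need not survive switching are sensible elaborations of details the paper leaves implicit, but the argument is the same.
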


\begin{proof} Since $\Sigma$ and $\Sigma'$ share the same spectrum, we deduce that the multiplicity of $\mu$ in $\Sigma'$ is also $k$. Now, $\mu$ is  non-main in $\Sigma'$ since $\mathbf{j}_n\in\mathcal{E}(\varrho)$, and so the result follows by Theorem~\ref{the:dec}.
\end{proof}

If $\Sigma$ is a homogeneous signed graph, by \cite{BeRo} the upper bound \eqref{eq:ubou} reduces to $n\leq {t+1\choose 2}-1$, and the case of equality is fully resolved in the same reference (see also \cite[Theorem 5.3.3]{CvRoSi-1}). In what follows
we consider inhomogeneous signed graphs that attain the equality in \eqref{eq:ubou}. For this purpose, we need the following combinatorial definition taken from \cite{ZS}. We say that a signed graph $\Sigma$ is \textit{strongly regular}  whenever it is regular and satisfies the following
conditions:
\begin{enumerate}
	\item[(i)] $\Sigma$ is neither homogeneous complete nor totally disconnected,
	\item[(ii)] there exists $a\in \mathbb{Z}$ such that $\sum_{u\in N(i)\cap N(j)}\sigma(ui)\sigma(uj)=a$, for all $i\simp j$,
	\item[(iii)] there exists $b\in \mathbb{Z}$ such that $\sum_{u\in N(x)\cap N(y)}\sigma(ui)\sigma(uj)=b$ holds for all $i \simm j$ and
	\item[(iv)] there exists $c\in \mathbb{Z}$ such that $\rho\in \mathbb{Z}$ such that
	$\sum_{u\in N(i)\cap N(j)}\sigma(ui)\sigma(uj)=c$ holds for all $i \nsim j$.
\end{enumerate}

Resuming the previous notation, if equality is attained in \eqref{eq:ubou}, then the functions $F, F_1, \ldots, F_n$ form a basis for 
${\cal H}_3$. In particular, we have
$$\langle \mathbf{x},\mathbf{x}\rangle\langle \mathbf{j},\mathbf{x}\rangle=\sum_{u=1}^n\beta_u F_u(\mathbf{x})+\gamma F(\mathbf{x}),$$
for some $(\beta_1, \beta_2, \ldots, \beta_n)^T=\mathbf{b}\in \mathbb{R}^n$ and some $\gamma\in \mathbb{R}$. We first prove that $\mathbf{b}$ is a multiple of ${\bf j}_n$.

By considering $\mathbf{x}+\mathbf{y}$ and $\mathbf{x}-\mathbf{y}$ instead of $\mathbf{x}$, we obtain
\begin{equation}\label{eq:eq}
\langle \mathbf{x},\mathbf{x}\rangle\langle \mathbf{j},\mathbf{y}\rangle+2\langle \mathbf{x},\mathbf{y}\rangle\langle \mathbf{j},\mathbf{x}\rangle=3\sum_{u=1}^n \beta_u\langle \mathbf{s}_u,\mathbf{x}\rangle^2\langle \mathbf{s}_u,\mathbf{y}\rangle+3\gamma\langle \mathbf{j},\mathbf{x}\rangle^2\langle \mathbf{j},\mathbf{y}\rangle.\end{equation}

By setting $\mathbf{x=j}$, $\mathbf{y=s}_i$, we obtain 
\begin{equation*}\label{eq:nr}
3s^2\gamma-3s=3(\beta_i\mu-\sum_{u\sim i}\beta_u\sigma(ui)),\end{equation*} and so 
$$(s^2\gamma-s){\bf j}_n=(\mu I-A_{\Sigma})\mathbf{b},$$ 
where $\mathbf{b}=(\beta_1,\beta_2,\ldots,\beta_n)^T$. By setting $\mathbf{x=y=s}_i$ in \eqref{eq:eq}, we get
$$-\mu=\sum_{u=1}^n\beta_u\langle \mathbf{s}_u,\mathbf{s}_i\rangle^3-\gamma,$$ and so $$(\gamma-\mu) \mathbf{j}_n=(\mu^3 I-A_{\Sigma})\mathbf{b}.$$ 
It follows that $\mathbf{b}=\beta \mathbf{j}_n$, where $\beta=\frac{s^2-s-\gamma+\mu}{\mu(1-\mu^2)}$. 

Again, if we consider $\mathbf{x}+\mathbf{z}$ and $\mathbf{x}-\mathbf{z}$ instead of $\mathbf{x}$ in \eqref{eq:eq}, in a very similar way we arrive at
\begin{equation}\label{sxyz}
\langle \mathbf{x},\mathbf{y}\rangle\langle \mathbf{j},\mathbf{z}\rangle+\langle \mathbf{x},\mathbf{z}\rangle\langle \mathbf{j},\mathbf{y}\rangle+\langle \mathbf{y},\mathbf{z}\rangle\langle \mathbf{j},\mathbf{x}\rangle=3\beta\sum_{u=1}^n \langle \mathbf{s}_u,\mathbf{x}\rangle\langle \mathbf{s}_u,\mathbf{y}\rangle\langle \mathbf{s}_u,\mathbf{z}\rangle+3\gamma\langle \mathbf{j},\mathbf{x}\rangle\langle \mathbf{j},\mathbf{y}\rangle\langle \mathbf{j},\mathbf{z}\rangle.
\end{equation}

By setting  $\mathbf{x=s}_i$, $\mathbf{y=z=j}$, we obtain
\begin{equation}\label{Snr}
-3s=3\beta\mu-3\beta d^{\pm}_i-3\gamma s^2.\end{equation}

Similarly, for $\mathbf{x=y=s}_i$, $\mathbf{z=j}$ in \eqref{sxyz}, we obtain\begin{equation}\label{Sr}\mu s+2=-3\beta\mu^2-3\beta d_i+3\gamma s.\end{equation}

Finally, for $\mathbf{x=s}_i$, $\mathbf{y=s}_j$, $\mathbf{z=j}$, we find
$$\langle \mathbf{s}_i,\mathbf{s}_j\rangle\langle \mathbf{j},\mathbf{j}\rangle+\langle \mathbf{s}_i,\mathbf{j}\rangle\langle \mathbf{j},\mathbf{s}_j\rangle+\langle \mathbf{s}_j,\mathbf{j}\rangle\langle \mathbf{j},\mathbf{s}_i\rangle=3\beta\sum_{u=1}^n\langle \mathbf{s}_u,\mathbf{s}_i\rangle\langle \mathbf{s}_u,\mathbf{s}_j\rangle\langle \mathbf{s}_u,\mathbf{j}\rangle+3\gamma\langle \mathbf{j},\mathbf{s}_i\rangle\langle \mathbf{j},\mathbf{s}_j\rangle\langle \mathbf{j},\mathbf{j}\rangle.$$

For $i\nsim j$, we immediately obtain
\begin{equation}\label{Sinj} 3\beta \sum_{u\in N(i)\cap N(j)}\sigma(ui)\sigma(uj)=3\gamma s-2.\end{equation}

For $i\sim j$, we obtain

\begin{equation}\label{Sir}3\beta \sum_{u\in N(i)\cap N(j)}\sigma(ui)\sigma(uj)=6\sigma(ij)\beta\mu-2+\sigma(ij)s+3\gamma s.\end{equation}

Since $\Sigma$ is inhomogeneous, the last equality implies $\beta\neq 0$. Gathering the above results, we arrive at the following conclusion.

\begin{theorem}If equality in \eqref{eq:ubou} is attained for a signed graph $\Sigma$, then $\Sigma$ is a net-regular strongly regular signed graph with parameters\\
$a = (6\beta\mu-2+s+3\gamma s)/3\beta,~~~ 
b = (-6\beta\mu-2-s+3\gamma s)/3\beta, ~~~c = (3\gamma s -2)/3\beta$.
\end{theorem}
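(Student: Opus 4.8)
The plan is to extract all the combinatorial regularity of $\Sigma$ from the single algebraic fact that equality in \eqref{eq:ubou} turns $\{F, F_1, \dots, F_n\}$ into a basis of $\mathcal{H}_3$. Under the equality hypothesis $\dim\mathcal{H}_3=\binom{t+2}{3}=n+1$, so these $n+1$ linearly independent cubics (independent by Lemma~\ref{ind} and by the argument in Theorem~\ref{the:dec}) span $\mathcal{H}_3$, and every homogeneous cubic on $\mathbb{R}^t$ is a unique combination of them. First I would apply this to the specific cubic $\mathbf{x}\mapsto\langle\mathbf{x},\mathbf{x}\rangle\langle\mathbf{j},\mathbf{x}\rangle$, a quadratic form times a linear form, writing $\langle\mathbf{x},\mathbf{x}\rangle\langle\mathbf{j},\mathbf{x}\rangle=\sum_u\beta_u F_u(\mathbf{x})+\gamma F(\mathbf{x})$; this is the displayed identity preceding the theorem, and it introduces the coefficient vector $\mathbf{b}=(\beta_1,\dots,\beta_n)^T$ and the scalar $\gamma$ that will carry all the information.

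Next I would polarize exactly as in the proof of Lemma~\ref{ind}: substituting $\mathbf{x}\pm\mathbf{y}$ and subtracting peels off the top-degree term to give \eqref{eq:eq}, and a further polarization yields the fully trilinear identity \eqref{sxyz}. The two devices that convert these analytic identities into graph data are \eqref{dot}, which evaluates $\langle\mathbf{s}_u,\mathbf{s}_i\rangle$ as $\mu$, $0$, or $-\sigma(ui)$, and Lemma~\ref{pq} applied with $\mathbf{w}=\mathbf{j}_n$ (legitimate because $\mu$ is non-main here), which gives $\langle\mathbf{s}_u,\mathbf{j}\rangle=-1$. Evaluating \eqref{eq:eq} at $(\mathbf{j},\mathbf{s}_i)$ and at $(\mathbf{s}_i,\mathbf{s}_i)$ produces the two vector equations $(s^2\gamma-s)\mathbf{j}_n=(\mu I-A_\Sigma)\mathbf{b}$ and $(\gamma-\mu)\mathbf{j}_n=(\mu^3 I-A_\Sigma)\mathbf{b}$; subtracting and invoking $\mu\neq\mu^3$ forces $\mathbf{b}=\beta\mathbf{j}_n$ for a scalar $\beta$. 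I expect this collapse of $\mathbf{b}$ to a multiple of $\mathbf{j}_n$ to be the crux and the main obstacle of the proof: only once all the $\beta_u$ are equal do the weighted sums $\sum_u\beta_u\langle\cdots\rangle$ reduce to unweighted signed counts, so that the right-hand sides below depend on a vertex, or on a pair, through purely local data.

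With $\mathbf{b}=\beta\mathbf{j}_n$ installed in \eqref{sxyz}, the remaining work is disciplined substitution. I first note that $\Sigma$ is inhomogeneous, so $\beta\neq0$ (as follows from \eqref{Sir}), which makes the divisions below valid. Evaluating \eqref{sxyz} at $(\mathbf{s}_i,\mathbf{j},\mathbf{j})$ gives \eqref{Snr}, whose only $i$-dependence is through $d_i^\pm$; solving for the net-degree exhibits it as a constant, so $\Sigma$ is net-regular. Evaluating at $(\mathbf{s}_i,\mathbf{s}_i,\mathbf{j})$ gives \eqref{Sr}, whose only $i$-dependence is through $d_i$, so $\Sigma$ is regular.

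Finally I would evaluate \eqref{sxyz} at $(\mathbf{s}_i,\mathbf{s}_j,\mathbf{j})$ and split into the three cases $i\nsim j$, $i\simp j$, $i\simm j$. Using \eqref{dot} and $\langle\mathbf{s}_u,\mathbf{j}\rangle=-1$, the cubic sum $\sum_u\langle\mathbf{s}_u,\mathbf{s}_i\rangle\langle\mathbf{s}_u,\mathbf{s}_j\rangle\langle\mathbf{s}_u,\mathbf{j}\rangle$ collapses to $-\sum_{u\in N(i)\cap N(j)}\sigma(ui)\sigma(uj)$ when $i\nsim j$, and to this same signed common-neighbour count together with the diagonal contributions from $u=i,j$ (which supply the $6\sigma(ij)\beta\mu$ term) when $i\sim j$; this yields \eqref{Sinj} and \eqref{Sir}. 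Dividing by $3\beta\neq0$ then presents the three signed common-neighbour sums as the constants $c=(3\gamma s-2)/3\beta$ for non-adjacent pairs, and $a=(6\beta\mu-2+s+3\gamma s)/3\beta$, $b=(-6\beta\mu-2-s+3\gamma s)/3\beta$ for positive and negative edges respectively, which are exactly conditions (ii)--(iv) with the claimed parameters. Since inhomogeneity also rules out the homogeneous-complete and the totally disconnected cases, condition (i) holds, and $\Sigma$ is the asserted net-regular strongly regular signed graph.
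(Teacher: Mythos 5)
Your proposal is correct and follows essentially the same route as the paper: expand $\langle\mathbf{x},\mathbf{x}\rangle\langle\mathbf{j},\mathbf{x}\rangle$ in the basis $\{F,F_1,\ldots,F_n\}$, polarize to \eqref{eq:eq} and \eqref{sxyz}, collapse $\mathbf{b}$ to $\beta\mathbf{j}_n$ via the two vector equations, and then read off net-regularity, regularity and the three common-neighbour constants from the substitutions $(\mathbf{s}_i,\mathbf{j},\mathbf{j})$, $(\mathbf{s}_i,\mathbf{s}_i,\mathbf{j})$ and $(\mathbf{s}_i,\mathbf{s}_j,\mathbf{j})$. The only point treated more lightly than in the paper is the justification that $\Sigma$ must be inhomogeneous (the paper notes that equality fails for $t=2$ and that $\binom{t+1}{2}<\binom{t+2}{3}$ for $t\ge 3$ excludes homogeneous signed graphs), but this does not affect the substance of the argument.
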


\begin{proof}Note that the upper bound is not attained for $t=2$, while for $t\geq 3$, $\Sigma$ must be inhomogeneous (as ${t+1\choose 2}<{t+2\choose 3}$). Then, by the above discussion, we have $\beta\neq 0$. The net-regularity follows from \eqref{Snr}. The strong regularity follows from \eqref{Sr}--\eqref{Sir}.
\end{proof}

It follows that if $\Sigma$ is a signed graph which attains the upper bound
in Corollary~\ref{cor:attains}, then its switching equivalence class contains a net-regular strongly regular signed graph. 

%The authors of \cite{KS} partition the class of strongly regular signed graphs  into five subclasses ${\cal C}_1,\ldots, {\cal C}_5$ determined in turn by the parametric conditions (1) $a+b=0$, $c\ne 0$,  (2) $a+b=0$, $c =0$,  (3) 
%$c=\frac12 (a+b)\ne 0$, (4) $a+b\ne 0$, $c=0$, 
%(5) $c\ne\frac12 (a+b)$,  $c\ne 0$.   

In the case of Theorem 4.3,  we have 
$c=\frac12 (a+b)$.
From \cite{KS} we know that, under this condition, either $\Sigma$ has exactly two eigenvalues or it has exactly three  eigenvalues,  one of them being the net-degree with multiplicity 1.  Conversely,  if a net-regular signed graph of order $n$ has spectrum $\varrho,\lambda^{(f)}, \mu^{(g)}~(f\le g)$,
then a star complement for $\mu$ has order $f+1$,  and so
$ n\leq {f+3\choose 3}-1$, by Theorem \ref{the:dec}:  this inequality can be seen as an analogue of Seidel's `absolute bound' for strongly regular graphs.

We conclude with another particular case in which the upper bound of Theorem~\ref{boun} can be significantly improved.

	\begin{theorem}\label{betb}
		Let $\Sigma$ be a signed graph with $n$ vertices, let 
		$\mu\notin\{0, 1, -1\}$ be an eigenvalue of $\Sigma$  with multiplicity $k$,  and let $t=n-k$. If $-\mu^2$ is not an eigenvalue of the underlying graph $G$, then
		$$n\leq {t+1\choose 2}.$$
	\end{theorem}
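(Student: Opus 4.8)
The plan is to repeat the strategy of Theorem~\ref{boun}, but with the cubic functions $F_u$ replaced by the quadratic functions $G_u\colon \mathbb{R}^t \to \mathbb{R}$ given by $G_u(\mathbf{x}) = \langle \mathbf{s}_u, \mathbf{x}\rangle^2$. These lie in the space $\mathcal{H}_2$ of homogeneous quadratic functions on $\mathbb{R}^t$, which has dimension $\binom{t+1}{2}$. Thus it suffices to prove that $G_1, G_2, \ldots, G_n$ are linearly independent, and the stated bound will follow at once.

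To establish independence, I would suppose that $\sum_{u=1}^n \alpha_u G_u(\mathbf{x}) = 0$ for all $\mathbf{x} \in \mathbb{R}^t$ and aim to conclude $\mathbf{a} = (\alpha_1, \ldots, \alpha_n)^T = \mathbf{0}$. The decisive simplification, compared with the cubic case, is that evaluating at $\mathbf{x} = \mathbf{s}_i$ requires only $\langle \mathbf{s}_u, \mathbf{s}_i\rangle^2$, and squaring the values in~\eqref{dot} erases the signature: one obtains $\mu^2$ when $u = i$, $0$ when $u \nsim i$, and $(\pm 1)^2 = 1$ whenever $u \sim i$, regardless of the sign of the edge. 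Hence setting $\mathbf{x} = \mathbf{s}_i$ yields, for each $i$, the relation $\mu^2 \alpha_i + \sum_{u \sim i}\alpha_u = 0$, in which the second term is exactly $(A_G \mathbf{a})_i$ for $A_G$ the adjacency matrix of the underlying graph $G$. In matrix form this reads $(A_G + \mu^2 I)\mathbf{a} = \mathbf{0}$, that is, $A_G \mathbf{a} = -\mu^2 \mathbf{a}$.

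Now the hypothesis does all the remaining work: since $-\mu^2$ is not an eigenvalue of $G$, the matrix $A_G + \mu^2 I$ is invertible, forcing $\mathbf{a} = \mathbf{0}$ and hence the linear independence of $G_1, \ldots, G_n$. I would then conclude $n \le \dim \mathcal{H}_2 = \binom{t+1}{2}$. I do not expect any genuine obstacle here: unlike Lemma~\ref{ind}, no polarization identity and no appeal to Lemma~\ref{pq} are needed, precisely because squaring collapses the signed adjacency structure onto the ordinary adjacency matrix $A_G$. The only point requiring care is the bookkeeping that identifies $\sum_{u\sim i}\alpha_u$ with $(A_G\mathbf{a})_i$ and confirms that the eigenvalue governing the argument is $-\mu^2$ rather than $\mu$ or $\mu^3$; it is exactly this substitution that explains the spectral condition imposed on the underlying graph.
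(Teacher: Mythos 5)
Your proposal is correct and is essentially the paper's own argument: the authors define the same quadratic functions (called $P_u$ there), observe that the evaluation matrix $(P_u(\mathbf{s}_v))_{u,v}$ equals $\mu^2 I + A_G$, and invoke the hypothesis that $-\mu^2$ is not an eigenvalue of $G$ to get nonsingularity and hence linear independence, exactly as you do by evaluating a putative dependence at the vectors $\mathbf{s}_i$. Your observation that squaring erases the signature is precisely the point of the paper's computation of $P_u(\mathbf{s}_v)$.
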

	\begin{proof}
		 We define functions $P_u\colon \mathbb{R}^t\longrightarrow \mathbb{R}$ by
		 $$P_u(\mathbf{x})=\langle \mathbf{s}_u,\mathbf{x}\rangle^2,$$
		 so that $$P_u(\mathbf{s}_v)=\left\{
	\begin{array}{rl}
	\mu^2, & u=v, \\
	0, & u\nsim v, \\
	1, & u\sim v.
	\end{array}
	\right.
	$$
	This implies that $$(P_u(\mathbf{s}_v))_{u,v\in V(\Sigma)}=\mu^2I+A_{G}.$$ Since $-\mu^2$ is not an eigenvalue of $G$, the functions $P_1, P_2, \ldots, P_n$ are linearly independent. The assertion follows since the dimension of the space ${\cal H}_2$ of homogeneous quadratic functions on $\mathbb{R}^t$ is ${t+1\choose 2}$.
	\end{proof}
	
	Of course, the previous theorem improves the bound of Theorem~\ref{boun} whenever $t\geq 3$.

\section*{Acknowledgements}

Research of the third author is partially supported by Serbian Ministry of Education, Science and Technological Development, Projects 174012 and 174033.

	\section*{References}

\end{document}